\newtheorem{theorem}{Theorem}[section]
\newtheorem{proposition}[theorem]{Proposition}
\newtheorem{corollary}[theorem]{Corollary}
\theoremstyle{definition}
\newtheorem{definition}[theorem]{Definition}
\newtheorem{example}[theorem]{Example}
\newtheorem{problem}[theorem]{Problem}
\theoremstyle{remark}
\newtheorem{remark}[theorem]{Remark}
\numberwithin{equation}{section}
\begin{document}

\setcounter{page}{1}

\title[Quasi hyperrigidity and weak peak points]{Quasi hyperrigidity and weak peak points for non-commutative operator systems}

\author[M. N. N. Namboodiri,  S.PRAMOD, P.SHANKAR \MakeLowercase{and} A.K. VIJAYARAJAN]{M. N. N. Namboodiri$^{1}$, S. PRAMOD$^2$, P. SHANKAR$^3$ \MakeLowercase{and} A.K. VIJAYARAJAN$^4$}

\address{$^1$ Department of  Mathematics, Cochin University of Science \& Technology, Kochi, Kerala, India-682022}
\email{mnnadri@gmail.com}

\address{$^{2}$ Department of Agriculture, Government of Kerala, Kerala, India.}
\email{pramodsuryan@yahoo.com}

\address{$^{3}$ Kerala School of Mathematics, Kozhikode, Kerala, India-673571.}
\email{shankarsupy@gmail.com, shankar@ksom.res.in}

\address{$^{4}$ Kerala School of Mathematics, Kozhikode, Kerala, India-673571.}
\email{vijay@ksom.res.in}

\subjclass[2010]{Primary 46L07; Secondary 46L52 }

\keywords{weak peak point, quasi hyperrigid set, weak boundary representation, boundary representation}

\date{\today}

\begin{abstract} 

In this article, we introduce the notions of weak boundary representation, quasi hyperrigidity and weak peak points in the non-commutative setting  for operator systems in $C^*$-algebras. An analogue of Saskin's theorem relating quasi hyperrigidity and weak Choquet boundary for particular classes of $C^*$-algebras is proved. We also show that, if an irreducible representation is a weak boundary representation and weak peak then it is a boundary representation. Several examples are provided to illustrate these notions. It is also observed that isometries on Hilbert spaces play an important role in the study of certain operator systems.
\end{abstract} 

\maketitle

\section{Introduction}
The concepts of peak point and Choquet boundary play an important role in several areas of classical (commutative) analysis. To be more precise the idea was to identify optimal subsets of a compact Hausdorff space $X$ such that each and every element of a given class of continuous complex functions $C(X)$ attains maximum modulus on it. These notions are also related to classical approximation theory studied extensively with the concept of Korovkin (hyperrigid sets) sets. The idea of peak points was introduced by Bishop in connection with the study of Choquet boundary of subspaces of $C(X)$ and developed further by Saskin \cite{Sas} in the geometric formulation of the so called `Korovkin' sets. The non-commutative counterparts of these notions for operator systems and operator algebras were initiated by William Arveson in \cite{Arv0} and studied extensively by him in \cite{Arv1}, \cite{Arv2} and \cite{Arv3}. In \cite{Arv1} and \cite{Arv2} Arveson introduced the concept of non-commutative Choquet boundary and proved a number of results related to hyperrigid sets which are the non-commutative analogues of classical Korovkin sets in approximation theory of functions. A brief survey of the developments in `non-commutative Korovkin-type theory'  is given in \cite{Nam1} and a non-commutative version of the geometric theory of Korovkin sets  can be found in \cite{Nam2}.

In this article, we introduce the notion of a {\it weak peak point} for an operator system in a $C^*$-algebra which is a non-commutative analogue of peak point. This is with an intention to identify `distinguished' points that are `peak' in an appropriate sense that involves only simple machinery compared to those of peaking representation introduced by Arveson \cite{Arv2}. We introduce {\it quasi hyperrigid sets} in $C^*$-algebras which are weaker than hyperrigid sets. We also introduce {\it weak boundary representations} and study the relation between boundary representations and weak boundary representations for operator systems of $C^*$-algebras. We prove an analogue of Saskin's theorem relating quasi hyperrigid operator systems and weak boundary representations for operator systems of $C^*$-algebras.

 The main result of the paper is as follows.\\
{\bf Theorem:}~~Let $S$ be an operator system in a $C^*$-algebra $A = C^*(S)$. If $\pi\in \hat{A}$ is a weak peak point for $S$, $\pi$ is a weak boundary representation for $S$ and $\pi_{|_S}$ is pure, then $\pi$ is a boundary representation for $S$.

\section{Preliminaries}

Now we recall the necessary definitions in the commutative case which we are going to adapt appropriately to the non-commutative setting.

\begin{definition}
Let $X$ be a compact Hausdorff space and let $C(X)$ be the set of continuous complex valued functions on $X$. The subset $S$ of $C(X)$ is called a {\it Korovkin set} if whenever $\{\phi_n\}$ is a sequence of positive linear maps from $C(X)$ to itself such that $||\phi_n(f)-f||\rightarrow 0$ for all $f\in S$, then $||\phi_n(f)-f||\rightarrow 0$ for all $f\in C(X)$.
\end{definition}

The classical Korovkin theorem proves that $\{1,x,x^2\}$ is a Korovkin set for $C([0,1])$.

\begin{definition}
Let $X$ be a compact Hausdorff space and $G$ a closed subspace of $C(X)$, separating points and containing the identity of $C(X)$. A point $x_0 \in X$ is a {\it peak point} of $G$ if there exists a $g \in G$ for which $g(x_0) = \|g\|,\linebreak |g(x)| <\|g\|,~ x \neq x_0$.
\end{definition}

\begin{definition}
Let $S\subset C(X)$ containing the constant function $1$, where $X$ is a compact Hausdorff space. The Choquet boundary $\partial{S}$ of $S$ is defined as 
$\partial{S}=\{x\in X : \varepsilon_{x|_S} \text{ has a unique positive linear extension to}~ C(X), \text{where}~ \varepsilon_x ~ \text{denotes}\linebreak \hspace*{2cm}\text{the evaluation functional defined by}~ \varepsilon_x (f)=f(x),f\in C(X) \}. $
\end{definition}

The above two notions are closely related. One of the most significant results connecting them is as follows:
 
\begin{theorem}(\cite{BL}, page-170)
Let $X$ be a compact Hausdorff space and let G be the linear space spanned by a subset $S$ of $C(X)$ that contains constants and separates points of $X$. Then the set of peak points of $G$ is contained in the Choquet boundary of $G$.
\end{theorem}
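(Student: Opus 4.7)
The plan is to fix a peak point $x_0$ of $G$ and to show, using the peaking function, that any positive linear extension of $\varepsilon_{x_0}|_G$ to $C(X)$ is forced to be evaluation at $x_0$. I would realize such extensions via the Riesz representation theorem as regular Borel probability measures $\mu$ on $X$ satisfying $\int f\,d\mu = f(x_0)$ for every $f\in G$, and then show $\mu = \delta_{x_0}$.

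First I would normalize. Let $g\in G$ be a peaking function for $x_0$, so $g(x_0)=\|g\|$ and $|g(x)|<\|g\|$ for $x\ne x_0$; in particular $\|g\|$ is real and positive, so $g(x_0)\in \mathbb{R}_{>0}$. Set $M:=\|g\|$ and consider the real-valued continuous function
\[
F := M\cdot \mathbf{1} - \operatorname{Re}(g) \in C(X).
\]
Since $\operatorname{Re}(g(x))\le |g(x)|\le M$, we have $F\ge 0$ on $X$. Moreover, for $x\ne x_0$, $\operatorname{Re}(g(x))\le |g(x)|<M$, so $F(x)>0$, while $F(x_0)=M-M=0$. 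Thus $F$ is a non-negative continuous function vanishing precisely at $x_0$.

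Next, let $\phi$ be any positive linear extension of $\varepsilon_{x_0}|_G$ to $C(X)$, and let $\mu$ be the corresponding positive regular Borel measure given by Riesz--Markov. Since $\mathbf{1}\in G$, $\mu(X)=\phi(\mathbf{1})=1$, so $\mu$ is a probability measure. As $\mu$ is a positive (hence real) measure, for every $f\in C(X)$ one has $\operatorname{Re}\int f\,d\mu = \int \operatorname{Re}(f)\,d\mu$. Applying this to $g\in G$,
\[
\int \operatorname{Re}(g)\,d\mu = \operatorname{Re}\int g\,d\mu = \operatorname{Re}(g(x_0)) = M.
\]
Therefore
\[
\int F\,d\mu = M\cdot \mu(X) - \int \operatorname{Re}(g)\,d\mu = M - M = 0.
\]

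Finally, I would combine the previous two observations. Because $F\ge 0$ is continuous and $\int F\,d\mu=0$, the measure $\mu$ must be supported on the zero set of $F$, which equals $\{x_0\}$; indeed, if $\mu(X\setminus\{x_0\})>0$ then by inner regularity some compact $K\subset X\setminus\{x_0\}$ has $\mu(K)>0$, and compactness gives $F\ge\delta>0$ on $K$, contradicting $\int F\,d\mu=0$. Hence $\mu=\delta_{x_0}$ and $\phi=\varepsilon_{x_0}$, proving $x_0\in\partial G$. The only subtle step is the passage between the complex peaking function $g\in G$ and the real auxiliary function $F\notin G$ in general; this is handled by exploiting that $\mu$ is a \emph{real} positive measure, so taking real parts commutes with integration even though $\operatorname{Re}(g)$ and $\bar g$ need not lie in $G$. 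No hypothesis beyond the existence of the peaking function and $\mathbf{1}\in G$ is used, so the separation-of-points assumption enters only implicitly (to ensure $G$ is rich enough for peak points to exist).
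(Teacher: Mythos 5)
Your argument is correct and complete: realizing a positive extension as a probability measure via Riesz--Markov, using the peaking function to build the nonnegative auxiliary function $F = \|g\|\cdot\mathbf{1} - \operatorname{Re}(g)$ vanishing only at $x_0$, and concluding $\mu=\delta_{x_0}$ from $\int F\,d\mu=0$ is the standard proof of this inclusion. The paper itself offers no proof --- it simply cites the result from Berens and Lorentz --- so there is nothing to compare against; your write-up supplies a valid self-contained justification, and you correctly observe that the separation hypothesis plays no role in this direction.
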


Here we give the most remarkable and well celebrated theorem proved by Saskin in \cite{Sas}
\begin{theorem}\label{sast}
Let $S$ be a subset of $C(X)$ that separates points of $X$ and contains constant function. Then $S$ is a Korovkin set in $C(X)$ if and only if the Choquet boundary $\partial S = X$. 
\end{theorem}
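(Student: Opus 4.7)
The plan is to prove both implications by translating between positive linear maps on $C(X)$ and positive Radon measures on $X$ via the Riesz representation theorem, and then to exploit the defining property of $\partial S$, namely uniqueness of positive measure extensions of point evaluations $\varepsilon_x|_S$.

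For the implication $\partial S = X \Rightarrow S$ is Korovkin, suppose $\{\phi_n\}$ is a sequence of positive linear maps on $C(X)$ with $\|\phi_n(f)-f\|_\infty \to 0$ for all $f \in S$, and assume for contradiction that there is some $g \in C(X)$, a subsequence $\phi_{n_k}$, points $y_k \in X$, and $\varepsilon > 0$ such that $|\phi_{n_k}(g)(y_k)-g(y_k)| \geq \varepsilon$. The positive linear functionals $L_k \colon f \mapsto \phi_{n_k}(f)(y_k)$ have norms $L_k(1) = \phi_{n_k}(1)(y_k)$ which are uniformly bounded because $\phi_n(1)\to 1$ uniformly. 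Combining compactness of $X$ with Banach--Alaoglu, pass to a subnet along which $y_k \to y_0$ in $X$ and $L_k \to \mu$ weak-$*$ for some positive Radon measure $\mu$. For each $s \in S$ the uniform convergence $\phi_{n_k}(s) \to s$ together with continuity of $s$ yields $\int s\,d\mu = \lim L_k(s) = s(y_0)$, so $\mu$ is a positive extension of $\varepsilon_{y_0}|_S$ to $C(X)$. Because $y_0 \in \partial S$, uniqueness forces $\mu = \delta_{y_0}$, and then $L_k(g) \to g(y_0) = \lim g(y_k)$, contradicting the choice of $y_k$.

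For the reverse implication, fix $x_0 \in X$ and let $\mu$ be a probability measure on $X$ with $\int s\,d\mu = s(x_0)$ for all $s \in S$; the goal is $\mu = \delta_{x_0}$. By Urysohn's lemma (applicable because compact Hausdorff spaces are normal) pick continuous functions $\alpha_n \colon X \to [0,1]$ with $\alpha_n(x_0)=1$ and with supports shrinking to $\{x_0\}$. Define positive linear operators
$$\phi_n(f)(y) \;=\; \bigl(1-\alpha_n(y)\bigr)\,f(y) \;+\; \alpha_n(y)\int f\,d\mu.$$
For each $s \in S$, $\phi_n(s)-s = \alpha_n(s(x_0)-s)$, whose sup norm is dominated by $\sup_{y \in \operatorname{supp}\alpha_n} |s(x_0)-s(y)|$ and hence tends to zero by continuity of $s$. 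The Korovkin property of $S$ then upgrades this to $\phi_n(f) \to f$ uniformly for every $f \in C(X)$. Evaluating at $x_0$ gives $\phi_n(f)(x_0)=\int f\,d\mu$ for every $n$, so uniform convergence forces $\int f\,d\mu = f(x_0)$ for every $f \in C(X)$, i.e.\ $\mu = \delta_{x_0}$. Thus $x_0 \in \partial S$.

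The main technical point is the sequential-versus-net distinction. Constructing $\{\alpha_n\}$ as a genuine sequence with shrinking supports requires $x_0$ to have a countable neighborhood base, which is automatic when $X$ is metrizable, and similarly the extraction of a weak-$*$ limit in the first part is cleanest there. In the general compact Hausdorff case one either assumes metrizability of $X$ or reads the Korovkin condition along nets; the argument above is written so that this change requires no substantive modification. The role of the hypothesis that $S$ separates points and contains constants is to ensure that the Choquet boundary is well-behaved and that $\mu = \delta_{x_0}$ is the only candidate unique extension; neither hypothesis is used explicitly beyond this in the two implications above.
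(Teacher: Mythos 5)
Your argument is essentially correct, but note that the paper does not prove this statement at all: Theorem \ref{sast} is quoted as Saskin's classical result with a citation to \cite{Sas}, so there is no in-paper proof to compare against. Judged on its own, your two implications are the standard ones and both are sound. In the direction $\partial S = X \Rightarrow$ Korovkin, the reduction to the functionals $L_k(f)=\phi_{n_k}(f)(y_k)$, the use of $1\in S$ to get the uniform norm bound, the weak-$*$ subnet limit $\mu$ extending $\varepsilon_{y_0}|_S$, and the appeal to uniqueness of the extension are all correct; this half works for arbitrary compact Hausdorff $X$. In the converse direction, the smoothing operators $\phi_n(f)=(1-\alpha_n)f+\alpha_n\int f\,d\mu$ are a clean way to turn a second representing measure into a test sequence, and the computation $\phi_n(f)(x_0)=\int f\,d\mu$ closes the argument. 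The one genuine issue is the metrizability point you already flag: producing a \emph{sequence} $\alpha_n$ with supports shrinking to $\{x_0\}$ needs a countable neighbourhood base at $x_0$, and Saskin's theorem in its original form is indeed stated for compact \emph{metric} spaces (the paper's statement silently omits this hypothesis). Since you identify this explicitly and your argument is correct under that standing assumption, I would not count it as a gap in your proof so much as an imprecision inherited from the statement. Two minor quibbles: the closing claim that the hypotheses are ``not used explicitly'' is slightly off, since $1\in S$ is used exactly where you bound $\|L_k\|=L_k(1)$; and the separation hypothesis is in fact automatic once $\partial S=X$, since a failure to separate $x$ from $y$ gives two distinct positive extensions of $\varepsilon_x|_S$.
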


Our main aim of this paper is proving the non commutative analogues of the above theorems. To fix our notation and terminology we recall the fundamental notions. 

The {\it spectrum} $\hat{A}$ of a $C^*$-algebra ${A}$ is the set of all unitary equivalence classes of irreducible representations of ${A}$ on a Hilbert space.
An {\it operator system} $S$ in a $C^*$-algebra ${A}$ is a self-adjoint linear subspace of ${A}$  containing the identity of ${ A}$ such that ${ A} = C^*(S)$-the $C^*$-algebra generated by $S$. If $S \subseteq B(H)$-the space of all bounded linear operators on some Hilbert space $H$, then it is called a concrete operator system.
We use the notation $CP({A}, H)$ to denote the set of all completely positive (CP) maps from the $C^*$-algebra ${A }$ to  $B{\left(H\right)}$. By $UCP({A}, {H})$ we denote the subset of completely positive maps that are unital (UCP). A map $\phi \in UCP({A}, {H})$ is called $\it pure$, if whenever $\phi - \xi$ is completely positive for some $\xi  \in CP({A}, {H})$, there exists $0 \leq t \leq1$ such that $\xi = t \phi$. When $H$ is finite dimensional, elements of $ UCP({A}, {H})$ are called  $\it matrix~ states$. Let $UCP(A,\pi,H_{\pi})=\{\Phi: \Phi(\cdot)=V^*\pi(\cdot)V,~ V ~\text{is an isometry on} ~H_{\pi}\}$.

The following important notion of boundary representations in the theory of operator systems was introduced by Arveson \cite {Arv0}:

\begin{definition}
Let $S$ be an operator system in a $C^*$-algebra ${A}$.  A {\it boundary representation} for $S$ is an irreducible representation $\pi$ of $A$ on a Hilbert space such that $\pi_{\mid_ S}$ has a unique completely positive extension, namely $\pi$ itself to $A$. 
\end{definition}

 The set of all boundary representations for $S$ is called the {\it non-commutative Choquet boundary} of $S$ and is denoted by $\partial(S)$. 

The non-commutative approximation theory initiated by Arveson benefited remarkably from the theory of boundary representations. In \cite{Arv2} Arveson coined the term `hyperrigid set' when he introduced the non-commutative version of Korovkin set in commutative set-up and it is as follows. 

\begin{definition}
A set $G$ of generators of an abstract $C^*$-algebra $A$ is said to be {\it hyperrigid} if for every faithful representation $A \subseteq B(H)$ of $A$ on a Hilbert space and every sequence of unital completely positive maps $\{\phi_n\}$ from $B(H)$ to itself,
$$\lim\limits_{n\to\infty} \|\phi_{n}(g)-g\| = 0, ~~~\forall~ g \in G \Rightarrow \lim\limits_{n\to\infty} \|\phi_{n}(a)-a\| = 0, ~~~\forall~ a \in A.$$
\end{definition}

The non commutative analogue of peak point introduced by Arveson in \cite{Arv2} is as follows.

\begin{definition}
Let $S$ be a separable operator system and let $A=C^*(S)$ is the $C^*$-algebra generated by $S$. An irreducible representation $\pi:A \rightarrow B(H)$ is said to be a {\it peaking representation} for $S$ if there is an $n\geq 1$ and an $n\times n$ matrix $(s_{ij})$ over $S$ such that 
$$\mid\mid (\pi(s_{ij}))  \mid\mid > \mid\mid (\sigma(s_{ij}))  \mid\mid  $$
for every irreducible representation $\sigma$ not unitarily equivalent to $\pi$.
\end{definition}

\section{Weak choquet boundary and Quasi hyperrigid sets}

Here we introduce the notion of a weak boundary representation and discuss the nature and properties  of it. 

\begin{definition}
Let $A$ be a unital $C^*$-algebra and $S$ be an operator system of $A$ such that $A=C^*(S)$-the $C^*$-algebra generated by $S$. An irreducible representation $\pi:A\rightarrow B(H_{\pi})$ is called {\it weak boundary representation} for $S$ of $A$ if $\pi_{|_S}$ has a unique UCP map extension of the form $V^*\pi V$, namely $\pi$ itself, where $V:H_{\pi}\rightarrow H_{\pi}$ is an isometry.
\end{definition}

The set of all weak boundary representations for $S$ of $A$ is called weak Choquet boundary of $S$ and denoted by $\partial_W S$. We can observe that all the boundary representations are weak boundary representations for $S$. Thus $\partial S\subseteq \partial_W S$.

\begin{example}
Consider the classical case $A=C(X)$, where $X$ is a compact Hausdorff space. The irreducible representations up to unitary 
equivalence are one dimensional representations of $C(X)$ which correspond to point evaluation functionals and thereby precisely to the points of $X$. Let $S$ be a subspace of $C(X)$ containing identity such that $C^*(S)=C(X)$. Let $x\in X$, $\varepsilon_x:C(X)\rightarrow \mathbb{C}$ be the one dimensional irreducible representation given by $\varepsilon_x(f)=f(x),~~\text{for all}~~ f\in C(X)$. Let $V:\mathbb{C}\rightarrow \mathbb{C}$ be an isometry such that $V^*\varepsilon_x(f)V=\varepsilon_x(f)$ for all $f\in S$. Since $\mathbb{C}$ is one dimensional, $V$ is unitary and hence $V^*\varepsilon_x(f)V=\varepsilon_x(f)$ for all $f\in C(X)$. Therefore $\varepsilon_x$ is a weak boundary representation for all $x\in X$. In the classical case, spectrum of a $C^*$-algebra and weak Choquet boundary are the same irrespective of the choice of   the subspace $S$ of $C(X)$. Hence we conclude that $\partial S\subseteq \partial_W S =X$. By Saskin's theorem \ref{sast}, we conclude that a subspace $S$ is Korovkin in $C(X)$ if and only if $\partial S=\partial_W S=X$. Thus weak Choquet boundary fails to recognise hyperrigidity even in the commutative case since $\partial_W S =X$ for all $S\subseteq C(X)$.
\end{example}

\begin{example}\label{weakb}
Let $A$ be a $C^*$-algebra and $S$ be a operator system in $A$ such that $A=C^*(S)$, when $A$ is finite dimensional it is easy to see that $\partial_W S =\hat{A}$. The same can be deduced for infinite dimensional $C^*$-algebras for which all the irreducible representations are finite dimensional as in the cases of infinite direct sum of matrix algebras and infinite direct sums of the form  $ \oplus{( C(X_i)\otimes{M_n(\mathbb C)}})$, where $ X_i$ is a compact Hausdorff space for each $i$.
\end{example}

The notion of weak boundary representation is interesting in the infinite dimensional $C^*$-algebras. The following example shows that spectrum of a $C^*$-algebra is not always equal to weak Choquet boundary in infinite dimensional cases.

\begin{example}
Let $G$ = linear span$\left(I,S,S^*\right)$, where $S$ is the unilateral right shift in $B(H)$  and $I$ is the identity operator. Let $A=C^{*}(G)$ be the $C^*$-algebra generated by $G$.  We have  $K(H) \subseteq A$, $A/K(H)\cong {C}(\mathbb{T})$ is commutative, where $\mathbb{T}$ denotes the unit circle in $\mathbb{C}$ and the spectrum $\hat{A}$ of $A$ can be identified with $\{Id\}\cup \mathbb{T}$. We know that $\varepsilon_t$ is a one dimensional irreducible representation of $A$ for all $t\in \mathbb{T}$, therefore $\varepsilon_t$ is a weak boundary representation for $G$ of $A$ for all $t\in \mathbb{T}$. Note that $Id_{|_G}$ has more than one UCP extension from the class $CP(A,Id,H_{Id})$. Observe that $S^*Id(\cdot)S$ is also an extension of $Id_{|_G}$ therefore $Id$ is not a weak boundary representation.
\end{example}

  Here we introduce the notion of quasi hyperrigid sets and discuss the relation between  quasi hyperrigidity and other notions.
  
\begin{definition}
        A  set $S$ of generators of a $C^*$-algebra $A$ is said to be quasi hyperrigid,  if for every nondegenerate representation $\pi$ of $A$ on a Hilbert space $H_{\pi}$ and for every isometry $V:H_{\pi}\rightarrow{H_{\pi}}$ the condition $ V^*\pi(s) {V}=\pi(s)$ for all $s$ in $S$ implies that $V^*\pi(a)V =\pi(a)$ for all $a$ in $A$.
\end{definition}

Note that a set $S$ is quasi hyperrigid if and only if the linear span of $S \cup S^*$ is quasi hyperrigid and hence the notion extends naturally to operator systems.\\

Here we explore the relation between hyperrigidity and quasi hyperrigidity. It is trivial to see that hyperrigid sets are quasi hyperrigid. However, the converse is not true and hence the notion is strictly weaker. We illustrate this using several examples. The following one is a modified version from \cite{LN}. 

\begin{example}
 Let $ {M}_{n}(\mathbb C)$ denote the set of all $n\times n$ matrices over $\mathbb C$, where $ n\geq 3$. Define a unital completely positive map $\Phi$ on  $  M_{n}(\mathbb{C})$ as given below.
             Let             
          \[   M=\left[\begin{array}{ccccc}
             a_{11} & a_{12} & a_{13} &...... & a_{1n} \\ 
             a_{21} & a_{22} & a_{23} & ......& a_{2n} \\
             a_{31} & a_{32} & a_{33} & ......& a_{3n} \\
             .& .& .&.... ..& . \\
             .& .& .&.... ..& . \\
             a_{n1} & a_{n2} & a_{n3} & ......& a_{nn}
            
            \end{array}
            \right] \]
            be arbitrary. Now define $\Phi$ on ${M}_{n}\mathcal{(\mathbb{C})}$
            
              \[
            \Phi(M)=\left[\begin{array}{ccccc}  
             a_{11} & a_{12} & 0 &...... &0\\ 
             a_{21} & a_{22} & 0 & ......& 0\\
             0& 0 & a_{22} & ......& 0\\
             0&0 &0 & a_{22}&0 \\
             .& .& .&.... ..& . \\
            0 & 0 & 0& ......& a_{22}
                        \end{array}\right]
                        \]
            Now let $M=T$, where $a_{21}=1$ and all other entries equal to $0$. If $S = span\{I,T,T^*\}$ and $A = C^*(S)$, then $\Phi(s)= s $ for all $s$ in $S$, but 
            $\Phi(T{T}^*) \neq T{T}^*$. i.e, $S$ is not a hyperrigid set. However, if $V$ is any isometry such that $V^*V=I$, then $VV^*= I$, since $A$ is finite dimensional. Thus $S$ is quasi hyperrigid, but fails  to be a hyperrigid set. 
\end{example}

Now we give an infinite dimensional example of a quasi hyperrigid operator system which is not hyperrigid. This example is inspired by Robertson \cite{AGR}. In fact a slight modification of Robertson's construction of the CP map is made so as to make it unital. We choose an operator system in such a way that the example fit into our settings.

\begin{example}
We assume the construction of the theorem (\cite{AGR}, page 472). Let $A$ be a non commutative infinite dimensional $C^*$-algebra with only finite dimensional irreducible representations and define a modified version of the CP map $\phi$ on $A$ as given in equation (1) in \cite{AGR} as follows. Start with an $x$ in $A$ with spec($x$) $ =0$. Now define 
$$ \phi(a)=pap+\sigma(a)(I-p^2) $$
for each $a$ in $A$, where $p$ is the projection as described by Robertson. Then proceed as in \cite{AGR} to construct the required example by considering different cases for spec$(x^*x)$.
\end{example}

\begin{remark}
In the infinite dimensional $C^*$-algebras considered in example \ref{weakb} we can construct quasi hyperrigid operator systems which are not hyperrigid.
\end{remark}

Now we explore the notions of quasi hyperrigidity and weak Choquet boundary in the following results.

\begin{proposition}\label{unique}
Let $S$ be a separable operator system $S$ and $A=C^*(S)$. Then $S$ is quasi hyperrigid if and only if for every nondegenerate representation $\pi:A\rightarrow B(H_\pi)$ on a separable Hilbert space, $\pi_{|_S}$ has a unique UCP map extension of the form $V^*\pi V$, where $V:H_\pi \rightarrow H_\pi$ is an isometry.
\end{proposition}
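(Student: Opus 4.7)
The plan is to observe that the definition of quasi hyperrigidity is essentially a direct restatement of uniqueness for UCP extensions of the form $V^{*}\pi V$. The forward direction is immediate; the reverse requires only a mild separability reduction to match the two quantifications, since the definition of quasi hyperrigidity ranges over \emph{all} nondegenerate representations, while the hypothesis of the proposition treats only those on separable Hilbert spaces.

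For the forward direction I would assume $S$ is quasi hyperrigid and fix a nondegenerate representation $\pi:A\to B(H_{\pi})$ on a separable Hilbert space. If $\Phi(\cdot)=V^{*}\pi(\cdot)V$ is a UCP extension of $\pi_{|_{S}}$ for some isometry $V$ on $H_{\pi}$, then $V^{*}\pi(s)V=\pi(s)$ for every $s\in S$; the definition of quasi hyperrigidity then forces $V^{*}\pi(a)V=\pi(a)$ for every $a\in A$, so $\Phi=\pi$ and the desired uniqueness follows.

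For the reverse direction, fix a nondegenerate $\pi:A\to B(H_{\pi})$ and an isometry $V:H_{\pi}\to H_{\pi}$ with $V^{*}\pi(s)V=\pi(s)$ on $S$. The map $\Phi(a)=V^{*}\pi(a)V$ is automatically UCP (since $V$ is an isometry) and agrees with $\pi$ on $S$. When $H_{\pi}$ is separable, the hypothesis immediately yields $\Phi=\pi$ on all of $A$. For a general $H_{\pi}$ I would carry out the standard separability reduction: fix $\xi\in H_{\pi}$, choose a countable dense subset $\{a_{n}\}\subset A$ (which exists since $S$, hence $A=C^{*}(S)$, is separable), and let $K_{\xi}$ be the closed linear span of all vectors of the form $T_{1}T_{2}\cdots T_{k}\xi$, where each $T_{i}$ ranges over $\{\pi(a_{n})\}\cup\{V,V^{*}\}$. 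Then $K_{\xi}$ is separable and invariant under $\pi(A)$, $V$, and $V^{*}$. Restricting everything to $K_{\xi}$ produces a nondegenerate representation on a separable Hilbert space together with an isometry satisfying the hypothesis, and the separable case yields $V^{*}\pi(a)V\xi=\pi(a)\xi$. Since $\xi\in H_{\pi}$ was arbitrary, $V^{*}\pi(a)V=\pi(a)$ on $A$ and $S$ is quasi hyperrigid.

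The only genuine step requiring care is this separability reduction; everything else is a direct definitional translation, with no deeper operator-algebraic machinery required beyond the observation that $V^{*}\pi(\cdot)V$ is UCP whenever $V$ is an isometry. I expect the proof in the paper to either proceed exactly this way, or to quietly use the separable case as if it were the full definition, on the grounds that $\pi_{|_{K_{\xi}}}$ is the relevant object on which uniqueness must be tested pointwise.
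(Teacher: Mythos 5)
Your argument is correct, and it is in fact more complete than what the paper offers: the paper's entire proof of this proposition is the single sentence ``The result is immediate from the definition of quasi hyperrigidity.'' The forward direction genuinely is a definitional translation, exactly as you say. The reverse direction, however, has the quantifier mismatch you identified --- quasi hyperrigidity is defined over \emph{all} nondegenerate representations, while the proposition only hypothesizes uniqueness for those on separable Hilbert spaces --- and the paper silently ignores this. Your reduction (restricting to the separable subspace $K_{\xi}$ generated from a vector $\xi$ by a countable dense subset of $\pi(A)$ together with $V$ and $V^{*}$, which is invariant under all three, then applying the separable case and letting $\xi$ vary) is the standard and correct way to close that gap; note that nondegeneracy of the restricted representation is automatic here since $A$ is unital and $\pi(1)=I$, and that $(V|_{K_{\xi}})^{*}=V^{*}|_{K_{\xi}}$ precisely because you arranged invariance under both $V$ and $V^{*}$. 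So your proof does not diverge from the paper's intent, but it supplies the one step of substance that the paper's terse proof omits.
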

\begin{proof}
 The result is immediate from the definition of quasi hyperrigidity.
\end{proof}

\begin{proposition}
Let $S$ be a separable operator system generating a $C^*$-algebra $A$. If $S$ is quasi hyperrigid, then every irreducible representation of $A$ is a weak boundary representation for $S$. 
\end{proposition}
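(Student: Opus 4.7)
The proof should be essentially a direct consequence of Proposition \ref{unique} combined with the fact that any irreducible representation is nondegenerate, so the plan is very short. Let $\pi:A\to B(H_\pi)$ be any irreducible representation of $A$. Because irreducibility forces $\pi$ to be nondegenerate (the identity of $A$ maps to the identity of $B(H_\pi)$, and separability of $S$ passes to $A=C^*(S)$ and hence to $H_\pi$), Proposition \ref{unique} applies directly: the restriction $\pi_{|_S}$ has a unique UCP extension of the form $V^*\pi V$ with $V$ an isometry on $H_\pi$.

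The final step is simply to observe that $\pi$ itself is visibly an extension of $\pi_{|_S}$ of this prescribed form, obtained by taking $V$ equal to the identity operator on $H_\pi$ (which is trivially an isometry). By the uniqueness established in the previous step, this extension must be $\pi$, which is exactly the definition of a weak boundary representation for $S$.

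I do not anticipate any real obstacle: the content is in Proposition \ref{unique}, which translates quasi hyperrigidity into a uniqueness statement about UCP extensions of the form $V^*\pi V$ for arbitrary nondegenerate representations, and irreducible representations are a special case. The only point worth being explicit about is that separability of $S$ ensures $A$, and hence $H_\pi$, is separable, so the hypothesis of Proposition \ref{unique} is satisfied. The proof in the paper is therefore likely to consist of a single line invoking Proposition \ref{unique}.
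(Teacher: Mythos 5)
Your proposal is correct and matches the paper's approach: the paper's proof is exactly the one-line invocation of Proposition \ref{unique} that you predicted, with the same observations (irreducible representations are nondegenerate, and $\pi$ itself is an extension of the prescribed form) left implicit.
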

\begin{proof}
The assertion is an immediate consequence of above proposition.
\end{proof}

\begin{problem}\label{prblm}
If every irreducible representation of $A$ is a weak boundary representation for a separable operator system $S\subseteq A$, then is $S$  quasi hyperrigid?
\end{problem}

We will settle the above problem for certain classes of $C^*$-algebras.

\begin{proposition}\label{exten}
Let $S$ be a operator system generating a $C^*$-algebra $A=C^*(S)$ and for each $i$ in an index set $I$, let $\pi_i : A \rightarrow B(H_{\pi_i})$ be a representation such that $\pi_{{i|}_S}$ has unique UCP map extension of the form $V_{\pi_i}^*\pi_i V_{\pi_i}$, where $V_{\pi_i} : H_{\pi_i} \rightarrow H_{\pi_i}$ is an isometry. Then for the direct sum of representations $\pi=\oplus_{i\in I} \pi_i : A \rightarrow B(\oplus_{i \in I} H_{\pi_i})$, $\pi_{|_S}$ has unique UCP map extension of the form $V^*_{\pi}\pi V_{\pi}$, where $V_{\pi}:\oplus_{i \in I} H_{\pi_i} \rightarrow \oplus_{i \in I} H_{\pi_i}$ is an isometry.
\end{proposition}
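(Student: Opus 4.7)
The plan is to reduce the claim to a per-column application of the hypothesis via a block decomposition of the isometry. Let $V:H\to H$ be an isometry on $H:=\bigoplus_i H_{\pi_i}$ satisfying $V^*\pi(s)V=\pi(s)$ for all $s\in S$; the goal is to show $V^*\pi(a)V=\pi(a)$ for every $a\in A$. First I would decompose $V$ block-wise as $V=(V_{ij})$ with $V_{ij}:H_{\pi_j}\to H_{\pi_i}$ and introduce the column isometries $V_{\cdot j}:H_{\pi_j}\hookrightarrow H$ obtained by restricting $V$ to the $j$-th summand. Expanding the relation $V^*\pi(s)V=\pi(s)$ block-wise gives
\[
V_{\cdot j}^*\,\pi(s)\,V_{\cdot k}=\delta_{jk}\,\pi_j(s),\qquad s\in S,
\]
so in particular $\phi_j(a):=V_{\cdot j}^*\pi(a)V_{\cdot j}$ is a UCP map from $A$ to $B(H_{\pi_j})$ that extends $\pi_j|_S$.

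The heart of the argument is to show that $V_{\cdot j}$ intertwines $\pi_j$ and $\pi$ on all of $A$, i.e.\ $\pi(a)V_{\cdot j}=V_{\cdot j}\pi_j(a)$ for every $a\in A$. A direct Schwarz-defect computation gives, for $s\in S$,
\[
\bigl(\pi(s)V_{\cdot j}-V_{\cdot j}\pi_j(s)\bigr)^*\bigl(\pi(s)V_{\cdot j}-V_{\cdot j}\pi_j(s)\bigr)=\phi_j(s^*s)-\pi_j(s^*s)\ge 0,
\]
so the intertwining on $S$ is equivalent to the equality $\phi_j(s^*s)=\pi_j(s^*s)$. I would obtain this equality by passing to the minimal Stinespring dilation of $\phi_j$, which is realized on the $\pi$-invariant subspace $K_j:=\overline{\pi(A)V_{\cdot j}H_{\pi_j}}\subseteq H$, and then invoking the per-summand hypothesis on $\pi_j$ to argue that this minimal dilation can be identified with $\pi_j$ acting on $H_{\pi_j}$, so that $\phi_j=\pi_j$ on all of $A$. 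The intertwining on $S$ then extends to $A=C^*(S)$ by multiplicativity in the usual way.

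Once $\pi(a)V_{\cdot j}=V_{\cdot j}\pi_j(a)$ is established for every $a\in A$ and every $j$, the block entries of $V^*\pi(a)V$ compute as
\[
\bigl(V^*\pi(a)V\bigr)_{jk}=V_{\cdot j}^*\pi(a)V_{\cdot k}=V_{\cdot j}^*V_{\cdot k}\,\pi_k(a)=\delta_{jk}\,\pi_k(a),
\]
which yields $V^*\pi(a)V=\pi(a)$ and settles uniqueness. The main obstacle is the middle step, namely bridging the per-summand hypothesis (which speaks of isometries on a single $H_{\pi_j}$) with the column isometry $V_{\cdot j}:H_{\pi_j}\to H$ that maps into the potentially much larger direct sum; producing an honest self-isometry on $H_{\pi_j}$ alone out of the Stinespring data requires a careful minimality identification inside $\pi$, and this is the step where the uniqueness assumption on each $\pi_i$ must really be used.
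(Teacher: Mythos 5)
Your outer scaffolding is, up to notation, exactly the paper's proof: your $\phi_j(a)=V_{\cdot j}^{*}\pi(a)V_{\cdot j}$ is the paper's compression $\Phi_i(a)=P_i\Phi(a)|_{H_{\pi_i}}$, and your final computation of the blocks $\bigl(V^{*}\pi(a)V\bigr)_{jk}$ is equivalent to the paper's Schwarz-inequality step showing $(1-P_i)\Phi(a)P_i=0$, i.e.\ that the off-diagonal blocks vanish once the diagonal compressions are known to equal $\pi_i$. So the decomposition and the closing argument are fine and are the paper's own.

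The step you flag as ``the main obstacle'' is a genuine gap, and your proposed fix does not close it. The hypothesis on each $\pi_j$ is not the full unique extension property; it is uniqueness only within the restricted class $\{W^{*}\pi_j(\cdot)W : W \text{ an isometry on } H_{\pi_j}\}$. The map $\phi_j$ is a UCP extension of $\pi_j|_S$, but it is not visibly of that form: its minimal Stinespring dilation lives on the $\pi$-invariant subspace $K_j=\overline{\pi(A)V_{\cdot j}H_{\pi_j}}$ of the big direct sum, which is a subrepresentation of $\oplus_i\pi_i$ and in general need not be unitarily equivalent to, or even embeddable in, $\pi_j$ acting on $H_{\pi_j}$ (think of the case where many summands are copies of a single irreducible representation: $K_j$ can carry a higher multiple of it). So there is no evident way to extract from the Stinespring data the self-isometry of $H_{\pi_j}$ that the hypothesis requires, and the assertion $\phi_j=\pi_j$ on all of $A$ remains unproved. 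You should be aware that the paper's own proof has exactly the same weakness: it writes ``Since $\Phi_i$ restricted to $\pi_i$ on $S$ has unique extension we have $\Phi_i(a)=\pi_i(a)$,'' silently treating the hypothesis as if it were the unrestricted unique extension property. You have correctly located the soft spot of the argument, but neither your sketch nor the paper's text actually repairs it; as written, the proof only goes through under the stronger assumption that each $\pi_i|_S$ has a unique UCP extension among all UCP maps.
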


\begin{proof}
Let $\Phi=V^*_{\pi}\pi V_{\pi}=V^*_{\pi}\oplus_{i\in I} \pi_i  V_{\pi} : A \rightarrow B(\oplus_{i \in I} H_{\pi_i})$ be an extension of $\pi_{|_S}$ for an isometry $V_{\pi}:\oplus_{i \in I} H_{\pi_i} \rightarrow \oplus_{i \in I} H_{\pi_i}$. For each $i \in I$, let $\Phi_i: A \rightarrow B(H_{\pi_i})$ be the UCP map 
$$\Phi_i(a)=P_i\Phi(a)|_{H_{\pi_i}},~~~~~~~~ a \in A $$
where $P_i$ is the projection onto $H_{\pi_i}$. Since $\Phi_i$ restricted to $\pi_i$ on $S$  has unique extension we have $\Phi_i(a)=\pi_i(a)$ for all $a\in A$. Equivalently $P_i\Phi(a)P_i=\pi(a)P_i$. Using Schwarz inequality,
$$\begin{array}{rcl}
P_i\Phi(a)^*(1-P_i)\Phi(a)P_i& = &P_i\Phi(a)^*\Phi(a)P_i - P_i\Phi(a)^* P_i \Phi(a)P_i\\
                             &\leq & P_i\Phi(a^*a)P_i- \pi(a)^* P_i \Phi(a)P_i \\
                             & = & \pi(a^*a)P_i -\pi(a)^*\pi(a)P_i \\
                             & = & 0
\end{array}$$
Hence $\left|(1-P_i)\Phi(a)P_i \right|^{2}=0$, and it follows that $P_i$ commutes with the self adjoint family of operators $\Phi(A)$. Hence for every $a\in A$ we have
$$ \Phi(a)=\sum_{i\in I} \Phi(a)P_i=\sum_{i\in I} P_i\Phi(a)P_i = \sum_{i\in I} \pi(a)P_i = \pi(a) $$ 

Hence $\Phi(a)=V^*_{\pi}\pi(a) V_{\pi}=V^*_{\pi}\oplus_{i\in I} \pi_i(a)  V_{\pi} = \pi(a)$ for all $a\in A$ and for an isometry  $V_{\pi}:\oplus_{i \in I} H_{\pi_i} \rightarrow \oplus_{i \in I} H_{\pi_i}$.

\end{proof}

Now we settle the problem \ref{prblm} for $C^*$-algebras with countable spectrum.

\begin{theorem}
Let $A=C^*(S)$ be the $C^*$-algebra generated by a separable operator system $S$ such that $A$ has countable spectrum. If every irreducible representation of $A$ is a weak boundary representation for $S$ then $S$ is quasi hyperrigid. 
\end{theorem}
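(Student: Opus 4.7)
\emph{Proof strategy.} By Proposition \ref{unique}, it suffices to show that for every nondegenerate representation $\pi : A \to B(H_\pi)$ on a separable Hilbert space, the restriction $\pi_{|_S}$ admits a unique UCP extension of the form $V^*\pi V$, where $V$ is an isometry on $H_\pi$. So fix such a $\pi$; the plan is to decompose $\pi$ as a direct sum of irreducible subrepresentations and then invoke Proposition \ref{exten}.

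Since $S$ is separable and generates $A$, the algebra $A$ is itself separable. Using the countable spectrum hypothesis, I would argue that $\pi$ is unitarily equivalent to a (countable) direct sum
\[
\pi \;\cong\; \bigoplus_{i\in I} \pi_i,
\]
where each $\pi_i$ is irreducible. This is the role the countable spectrum plays: it forces the kind of type~I / GCR structure under which a nondegenerate representation of a separable $C^*$-algebra decomposes as an honest direct sum of irreducibles (rather than a genuine direct integral).

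By hypothesis every irreducible representation of $A$ is a weak boundary representation for $S$, so each $(\pi_i)_{|_S}$ has a unique UCP extension of the form $V_i^*\pi_i V_i$ with $V_i$ an isometry on $H_{\pi_i}$. Proposition \ref{exten} applied to the family $\{\pi_i\}_{i\in I}$ then yields that $\pi_{|_S}$ has a unique UCP extension of the form $V^*\pi V$ for some isometry $V$ on $\bigoplus_{i\in I} H_{\pi_i}$. Combined with Proposition \ref{unique}, this gives quasi hyperrigidity of $S$.

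The main obstacle is justifying the direct sum decomposition of $\pi$: for a general separable $C^*$-algebra a nondegenerate representation need not decompose as a discrete direct sum of irreducibles, and the countable spectrum hypothesis is precisely what is needed to exclude this pathology. The delicate step is therefore showing that countability of $\hat A$ (together with separability of $A$) forces every nondegenerate representation on a separable Hilbert space to split as such a direct sum; once this is in place, the remainder of the argument is a clean reduction through Propositions \ref{exten} and \ref{unique}.
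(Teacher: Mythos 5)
Your proposal matches the paper's proof essentially step for step: reduce via Proposition \ref{unique} to showing every nondegenerate representation on a separable Hilbert space has the unique extension property, decompose $\pi$ into a countable direct sum of irreducible pieces using the countable-spectrum hypothesis, and conclude with Proposition \ref{exten}. The one step you flag but leave open --- why countability of $\hat{A}$ forces a discrete decomposition --- is handled in the paper exactly as you anticipate: countable spectrum makes $A$ type I, so $\pi$ decomposes as a direct integral of mutually disjoint type I factor representations, and countability of $\hat{A}$ forces that integral to be a countable direct sum of (multiples of) irreducibles, to which Proposition \ref{exten} applies.
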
 

\begin{proof}
To prove $S$ is quasi hyperrigid using proposition \ref{unique}, it is enough to prove that for every representation $\pi:A\rightarrow B(H_\pi)$ of $A$ on a separable Hilbert space, $\pi_{|_S}$ has the unique UCP map extension of the form $V_\pi^*\pi V_\pi$, where $V_\pi:H_\pi \rightarrow H_\pi$ is an isometry. Our assumption that spectrum of $A$ is countable implies that $A$ is a type I $C^*$-algebra, hence $\pi$ decomposes uniquely into a direct integral of mutually disjoint type I factor representations. Using the fact that spectrum of $A$ is countable again, the direct integral must be a countable direct sum. therefore $\pi$ can be decomposed into a direct sum of subrepresentations $\pi_n:A\rightarrow B(H_{\pi_n})$
$$H_\pi=H_{\pi_1}\oplus H_{\pi_2}\oplus...,~~~~~\pi=\pi_1\oplus \pi_2\oplus... $$  
with the property that each $\pi_n$ is unitarily equivalent to a finite or countable direct sum of copies of a single irreducible representation $\sigma_n:A\rightarrow B(H_{\sigma_n})$.

By our assumption, each map $\sigma_{{n|}_S}$ has the unique UCP map extension of the form $V_{\sigma_n}^*\sigma_n V_{\sigma_n}$, where $V_{\sigma_n}:H_{\sigma_n} \rightarrow H_{\sigma_n}$ is an isometry. Hence the above decomposition expresses $\pi_{|_S}$ as a (double) direct sum. By Proposition \ref{exten} it follows that $\pi_{|_S}$ has the unique UCP map extension of the form $V_\pi^*\pi V_\pi$, where $V:H_\pi\rightarrow H_\pi$ is an isometry.
\end{proof}

\section{Unique extensions-weaker versions}

In this section, we introduce the weaker notion of unique extension property of representations of $C^*$-algebras by considering particular class of UCP maps. 

\begin{definition}
Let $S$ be a operator system generating a $C^*$-algebra $A$. Let $\pi: A \rightarrow B(H_\pi)$ be a representation then $\pi$ is said to have weak unique extension property(WUEP) for $S$ if $\pi$ is the only UCP map extension of $\pi_{|_{S}}$ of the form $V^*\pi(\cdot)V$, where $V$ is an isometry on $H_{\pi}$.
\end{definition}

Kleski \cite{Klek} proved the hyperrigid conjecture of Arveson for a Type I $C^*$-algebras with an additional assumption on the codomain. Since our problem \ref{prblm} is similar to Arveson's hyperrigid conjecture with weaker notions, Kleski's \cite{Klek} results can be modified to our settings. The following results give partial answer to the problem \ref{prblm}. Since the arguments are exactly the same verbatim, we will state results without proof.

\begin{theorem}
Let $S$ be a separable operator system in $B(H)$ generating a $C^*$-algebra $A$, and suppose $A''$ is injective. 
Suppose every factor representation $\pi:A\rightarrow B(H_\pi)$ has WUEP for $S$ of $A$. Let $\rho$ be a faithful representation of $A$ on $B(K_\rho)$ and let $\gamma:\rho(A)\rightarrow B(K_\rho)$, $\gamma=V_1^*Id V_1$, where $V_1:K_\rho\rightarrow K_\rho$ is an isometry such that $\gamma(\rho(s))=\rho(s)$ for all $s\in S$. Then for every conditional expectation $E:B(K_\rho)\rightarrow \rho(A)''$, we have $E\gamma \rho (a)=\rho(a)$ for all $a\in A$.
\end{theorem}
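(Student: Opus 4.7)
The plan is to show that the UCP map $\Theta := E\gamma\rho : A \to \rho(A)''$ coincides with $\rho$ on all of $A$, given only that the two agree on $S$. First I would verify that $\Theta$ is a UCP extension of $\rho|_S$: the composition is UCP because $\rho$ is a $*$-homomorphism, $\gamma=V_1^*(\cdot)V_1$ is UCP (since $V_1$ is an isometry, so $\gamma$ is unital and completely positive), and the conditional expectation $E$ is UCP. For $s\in S$ we have $\gamma(\rho(s))=\rho(s)\in \rho(A)\subseteq \rho(A)''$ and $E$ restricts to the identity on $\rho(A)''$, so $\Theta(s)=\rho(s)$. Thus the question reduces to propagating this equality from $S$ to all of $A$ via the hypothesis of WUEP on factor representations.

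Next I would use the injectivity of $A''$ together with the separability of $S$ (and hence of $A$) to perform a central disintegration. By the standard direct integral theory for separable $C^*$-algebras with injective enveloping von Neumann algebra, one can write $\rho=\int^{\oplus}\rho_t\,d\mu(t)$ as a direct integral of factor representations $\rho_t:A\to B(H_{\rho_t})$ on $K_\rho=\int^{\oplus}H_{\rho_t}\,d\mu(t)$, together with the induced decomposition $\rho(A)''=\int^{\oplus}\rho_t(A)''\,d\mu(t)$. The pair $(V_1,E)$ then disintegrates measurably along this decomposition into isometries $V_{1,t}$ on $H_{\rho_t}$ and conditional expectations $E_t:B(H_{\rho_t})\to \rho_t(A)''$, producing fiberwise UCP maps $\Theta_t = E_t\gamma_t\rho_t: A\to \rho_t(A)''$ that extend $\rho_t|_S$.

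Having reached the fibers, I would invoke WUEP on each $\rho_t$. Using the minimal Stinespring representation of $\Theta_t$ and the fact that $\rho_t$ is a factor representation, the dilating representation absorbs into a multiple of $\rho_t$, allowing one to rewrite $\Theta_t$ in the form $W_t^*\rho_t(\cdot)W_t$ for a suitable isometry $W_t$ on $H_{\rho_t}$ (after the canonical identification). The WUEP hypothesis applied to $\rho_t$ then forces $\Theta_t(a)=\rho_t(a)$ for every $a\in A$. Reassembling via the direct integral yields $\Theta(a)=\rho(a)$ for all $a\in A$, which is the desired conclusion.

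The principal obstacle, which is precisely the step where Kleski's machinery must be transported verbatim, is twofold: first, ensuring that the disintegration of the data $(V_1,E)$ along the central decomposition produces measurable fields of isometries and conditional expectations of the required form; second, recognising the fiber map $\Theta_t$, which a priori only takes values in $\rho_t(A)''$, as being of the specific form $W_t^*\rho_t(\cdot)W_t$ to which WUEP applies. Both points rely essentially on the injectivity of $A''$, which guarantees the good behaviour of conditional expectations and permits the reduction to normal conditional expectations on each fiber. Once this technical infrastructure is in place, the WUEP hypothesis finishes the argument fiberwise without further work.
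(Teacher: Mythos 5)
The paper itself offers no proof of this theorem: it only asserts that Kleski's argument for the analogous statement with the full unique extension property carries over ``verbatim.'' Your outline is exactly that Kleski blueprint --- verify that $\Theta=E\gamma\rho$ is a UCP extension of $\rho|_S$ with values in $\rho(A)''$, use separability and injectivity to write the central disintegration $\rho(A)''=\int^{\oplus}\rho_t(A)''\,d\mu(t)$ with $\rho=\int^{\oplus}\rho_t\,d\mu(t)$ a direct integral of factor representations, disintegrate $\Theta$ into fiber maps $\Theta_t:A\to\rho_t(A)''$ agreeing with $\rho_t$ on $S$, and conclude fiberwise. Your first paragraph and the reduction to fibers are sound, with one simplification available: you should disintegrate the composite $\Theta$ directly (a UCP map into a direct integral of von Neumann algebras over a standard measure space decomposes into a measurable field of UCP maps into the fibers), rather than trying to disintegrate the pair $(V_1,E)$; the isometry $V_1$ has no reason to commute with the diagonal algebra of the decomposition, so it does not split into isometries $V_{1,t}$ on the fibers.

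The step you yourself flag as the ``principal obstacle'' is, however, a genuine gap and your proposal does not close it. The hypothesis here is only WUEP: uniqueness among extensions of the special form $W^*\rho_t(\cdot)W$ with $W$ an isometry \emph{on} $H_{\rho_t}$. The fiber maps $\Theta_t$ are merely UCP maps $A\to\rho_t(A)''\subseteq B(H_{\rho_t})$ extending $\rho_t|_S$; their minimal Stinespring representations need not be equivalent to $\rho_t$ (already for $A=C(X)$ and $\rho_t$ a point evaluation, $\Theta_t$ is an arbitrary state and its GNS representation can be anything), and even quasi-equivalence would only yield $\Theta_t=W_t^*(\rho_t\otimes 1_n)(\cdot)W_t$, a compression of a \emph{multiple} of $\rho_t$, which is not a form WUEP controls. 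Note that in the paper's own Theorem 5.2 the authors need an extreme-point argument plus purity of $\pi|_S$ plus the weak peak condition to force a general UCP extension into the shape $V_1^*\pi(\cdot)V_1$; none of that is available for a generic fiber $\Theta_t$. So your argument, as written, proves the theorem under Kleski's original full unique extension hypothesis but not under WUEP; to finish one must either strengthen WUEP to cover compressions of multiples of $\rho_t$, or supply an argument that each $\Theta_t$ has the isometric-compression form. This defect is shared with the paper's unproved claim of a verbatim transport, but it should be named and resolved rather than deferred.
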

 
\begin{corollary}
Let $S$ be an operator system generating a Type $I~C^*$-algebra $A$. If every irreducible representation of $A$ is a weak boundary representation for $S$, then for any representation $\pi:A\rightarrow B(K_\pi)$ and any UCP map $V^*IdV:\pi(A)\rightarrow B(K_\pi)$ for $V:K_\pi\rightarrow K_\pi$ is isometry such that $V^*Id(\pi(s))V=\pi(s)$ for all $s\in S$ and any conditional expectation $E:B(K_\pi)\rightarrow \pi(A)''$, $E(V^*IdV)\pi=\pi$.
\end{corollary}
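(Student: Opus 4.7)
The plan is to derive the corollary as a direct specialization of the preceding theorem, by showing that the two hypotheses of that theorem (namely that $A''$ is injective and that every factor representation has WUEP for $S$) follow automatically from the corollary's assumptions (Type $I$ plus all irreducibles being weak boundary representations). Once those two structural facts are in place, the conclusion $E(V^{*}IdV)\pi = \pi$ is exactly the conclusion of the previous theorem applied to the faithful-up-to-quotient representation $\pi$.

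First, I would handle the injectivity. For a Type $I$ $C^{*}$-algebra $A$, it is a classical fact that the bidual $A''$ (equivalently, any weak-operator closure of a faithful representation) is a Type $I$ von Neumann algebra, and Type $I$ von Neumann algebras are injective. So the injectivity hypothesis in the preceding theorem is automatic once $A$ is assumed Type $I$.

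Second, I would reduce factor representations to irreducibles. For a Type $I$ $C^{*}$-algebra, every factor representation $\pi$ is quasi-equivalent to an irreducible one, so concretely $\pi$ is unitarily equivalent to an amplification $\sigma \otimes 1_{\mathcal{K}}$ of some irreducible representation $\sigma$, that is, a (finite or countable) direct sum of copies of $\sigma$. By hypothesis $\sigma$ is a weak boundary representation for $S$, which by the definition of weak boundary representation is exactly the WUEP condition: the only UCP extension of $\sigma|_{S}$ of the form $W^{*}\sigma W$ with $W$ an isometry is $\sigma$ itself. Applying Proposition \ref{exten} to the direct sum decomposition of $\pi$ then shows that $\pi|_{S}$ admits only the trivial such extension, i.e.\ $\pi$ has WUEP for $S$.

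Having established both hypotheses, I would invoke the preceding theorem with $\rho = \pi$ (taking $\rho$ faithful requires passing to $A/\ker\pi$, which is again Type $I$ and inherits the weak boundary representation hypothesis, since irreducibles of the quotient lift to irreducibles of $A$ vanishing on $\ker\pi$); this yields $E\gamma\pi(a) = \pi(a)$ for all $a \in A$, where $\gamma = V^{*}Id\,V$, which is precisely the asserted identity $E(V^{*}IdV)\pi = \pi$. The main obstacle I anticipate is the bookkeeping in this last step: one must verify that passing from $A$ to $A/\ker\pi$ preserves the weak boundary representation hypothesis and that the conditional expectation onto $\pi(A)''$ interacts correctly with this quotient, but both are standard once Type $I$ structure is used. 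The nontrivial mathematical content — the decomposition of factor representations and the transfer of unique extension properties through direct sums — is already encapsulated in Proposition \ref{exten} and the Type $I$ structure theory, so the corollary becomes essentially a translation exercise.
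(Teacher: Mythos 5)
Your proposal is correct and follows exactly the route the paper intends: the paper states this corollary without proof, remarking only that the arguments are verbatim those of Kleski adapted to the weak setting, and those arguments are precisely your reduction (Type $I$ implies $\pi(A)''$ is an injective von Neumann algebra; factor representations of a Type $I$ algebra are multiples of irreducibles, so Proposition \ref{exten} upgrades the weak boundary representation hypothesis to WUEP for all factor representations; then the preceding theorem is applied after passing to $A/\ker\pi$). You in fact supply more detail than the paper does, including the quotient step needed to make $\pi$ faithful.
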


\begin{corollary}
Let $S$ be a separable operator system generating a Type $I~C^*$-algebra $A$. If every irreducible representation of $A$ is a weak boundary representation for $S$, then for any UCP map $V^*\pi V: A\rightarrow A''$, where $\pi:A\rightarrow A''$ is a representation and $V\in A''$ is an isometry such that $V^*\pi(s)V=\pi(s)$ for all $s\in S$ implies that $V^*\pi(a)V=\pi(a)$ for all $a\in A$.
\end{corollary}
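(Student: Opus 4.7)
The plan is to derive this statement directly from the preceding corollary, exploiting the fact that the extra hypothesis $V \in A''$ forces the output of the UCP map $V^*\pi(\cdot)V$ to already lie in $\pi(A)''$; this allows us to absorb the conditional expectation appearing in that corollary into the identity.

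First I would realise $A''$ concretely on a Hilbert space $K$ (for instance via the universal representation of $A$), so that the representation $\pi: A \to A''$ can be viewed as a representation $\pi: A \to B(K)$ with $\pi(A) \subseteq A''$, and the isometry $V \in A''$ becomes an isometry on $K$. The hypothesis $V^*\pi(s)V = \pi(s)$ for all $s \in S$ is then precisely the hypothesis of the preceding corollary. Since $A$ is Type I, the enveloping von Neumann algebra $A''$ is injective, so there exists a conditional expectation $E: B(K) \to \pi(A)''$, and the preceding corollary supplies
\[
E\bigl(V^*\pi(a)V\bigr) = \pi(a) \quad \text{for all } a \in A.
\]

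Next I would take $\pi$ to have weakly dense image in $A''$ (the natural case, e.g.\ the canonical inclusion $A \hookrightarrow A''$), so that $\pi(A)'' = A''$. Because $V \in A''$ and $A''$ is an algebra, the element $V^*\pi(a)V$ already lies in $A'' = \pi(A)''$; since a conditional expectation acts as the identity on its range, $E(V^*\pi(a)V) = V^*\pi(a)V$. Comparing with the previous display then yields $V^*\pi(a)V = \pi(a)$ for all $a \in A$, which is the desired conclusion.

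The main point requiring attention is the identification $\pi(A)'' = A''$: this is automatic for the canonical embedding and holds whenever $\pi$ has weakly dense image, but otherwise the statement has to be interpreted with $A''$ replaced by the smaller algebra $\pi(A)''$. Apart from that clarification the argument is essentially a repackaging of the preceding corollary, with the isometry $V$ chosen inside $A''$ serving to trivialise the conditional expectation.
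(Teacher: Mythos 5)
Your derivation is correct and is essentially the intended argument: the paper states this corollary without proof (deferring to Kleski's arguments, which it says carry over verbatim), and those proceed exactly as you do --- apply the conditional-expectation statement of the preceding corollary and observe that, since $V\in A''$, the element $V^*\pi(a)V$ already lies in the range of $E$, so $E$ fixes it and the expectation drops out. Your caveat about $\pi(A)''$ versus $A''$ is the right point to flag: the step $E(V^*\pi(a)V)=V^*\pi(a)V$ genuinely needs $V^*\pi(a)V\in\pi(A)''$, which holds under the natural reading in which $\pi$ is the canonical embedding of $A$ into its enveloping von Neumann algebra (so $\pi(A)''=A''$), and with that interpretation the argument closes with no gap.
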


\section{Weak peak points}
In this section we will introduce the notion of weak peak point which is a non-commutative analogue of peak point but different from Arveson's peaking representation.

\begin{definition}
Let $A$ be a unital $C^*$-algebra and $S$ be an operator system of $A$ such that $A = C^*(S)$, the $C^*$-algebra generated by $S$. An element $\pi$ of $\hat{A}$ is called a {\it weak peak point} for $S$ if there exists $s \in S$ such that
\begin{itemize}
\item [(i)] $|\langle \pi(s) \xi_\pi , \xi_\pi  \rangle| =\|s\|$  for some $\xi _\pi \in H_\pi$ with  $\|\xi_{\pi}\| = 1$,
\item [(ii)] $|\langle \sigma(s) \xi_{\sigma}  , \xi_{\sigma}  \rangle|< \|s\|$ for all $\xi _\sigma \in H_\sigma$ with $\|\xi_{\sigma}\| = 1$,
\end{itemize}
where $\sigma$ is any  irreducible representation not equivalent to $\pi$.
 We will denote the set of all weak peak points for $S$ by $P_w(S)$.
\end{definition}

However the exact relation between weak peak points and peaking representations of an operator system calls for further study.\\

We observed that the Choquet boundary of an operator system is contained in weak Choquet boundary of it and this inclusion is strict. So it would be interesting to know which weak Choquet boundary points are Choquet boundary points of an operator system. The following theorem gives partial answer to this query.

\begin{theorem}\label{thm1}
Let $S$ be an operator system in a $C^*$-algebra $A = C^*(S)$. If $\pi\in \hat{A}$ is a weak peak point for $S$, $\pi$ is a weak boundary representation for $S$ 
and $\pi_{|_S}$ is pure, then $\pi$ is a boundary representation for $S$.
\end{theorem}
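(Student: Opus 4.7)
The goal is to show that any UCP map $\Phi : A \to B(H_\pi)$ with $\Phi|_S = \pi|_S$ must equal $\pi$. Let
\[
E = \{\Phi : A \to B(H_\pi) : \Phi \text{ is UCP and } \Phi|_S = \pi|_S\};
\]
this is a convex, BW-compact subset of $UCP(A, H_\pi)$, nonempty since $\pi \in E$. My plan is to apply Krein--Milman: it suffices to show that every extreme point of $E$ equals $\pi$, for then $E = \{\pi\}$.

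Let $\Phi_0$ be an extreme point of $E$. The first step is to upgrade extremality to purity of $\Phi_0$ as a UCP map on all of $A$. Suppose $\xi \in CP(A, H_\pi)$ is dominated by $\Phi_0$, i.e.\ $\Phi_0 - \xi$ is CP. Restricting to $S$ and using that $\pi|_S$ is pure, I get $\xi|_S = t\,\pi|_S$ for some $t \in [0,1]$. The boundary cases $t = 0$ and $t = 1$ force $\xi = 0$ and $\xi = \Phi_0$ respectively, since $\xi$ or $\Phi_0 - \xi$ then vanishes on the unit. For $t \in (0,1)$, the normalized maps $\xi/t$ and $(\Phi_0 - \xi)/(1-t)$ both lie in $E$, and their convex combination with weights $t$ and $1-t$ is exactly $\Phi_0$. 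Extremality forces both summands to coincide with $\Phi_0$, giving $\xi = t\Phi_0$; hence $\Phi_0$ is pure on $A$.

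By Arveson's bijection between CP-dominations of a UCP map and positive contractions in the commutant of its minimal Stinespring, a pure UCP map has an irreducible minimal Stinespring dilation. Write $\Phi_0(a) = W^* \rho(a) W$ with $\rho : A \to B(K)$ irreducible and $W : H_\pi \to K$ an isometry. Let $s \in S$ and unit vector $\xi_\pi \in H_\pi$ realize the weak peak condition for $\pi$, so $|\langle \pi(s)\xi_\pi, \xi_\pi \rangle| = \|s\|$. Since $\Phi_0(s) = \pi(s)$, the unit vector $W\xi_\pi \in K$ satisfies
\[
|\langle \rho(s) W\xi_\pi, W\xi_\pi \rangle| = |\langle \pi(s)\xi_\pi, \xi_\pi \rangle| = \|s\|.
\]
If $\rho$ were not unitarily equivalent to $\pi$, this would contradict the strict weak peak inequality applied with $\sigma = \rho$. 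Hence $\rho \sim \pi$.

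Fix a unitary $U : K \to H_\pi$ with $U \rho(\cdot) U^* = \pi(\cdot)$ and set $V = UW : H_\pi \to H_\pi$, which is an isometry. Then
\[
\Phi_0(a) = (UW)^* \pi(a) (UW) = V^* \pi(a) V \quad \text{for all } a \in A,
\]
so $\Phi_0$ is a UCP extension of $\pi|_S$ of precisely the form controlled by the weak boundary representation hypothesis, which forces $V^*\pi V = \pi$, i.e.\ $\Phi_0 = \pi$. Every extreme point of $E$ therefore equals $\pi$, and Krein--Milman yields $E = \{\pi\}$, proving that $\pi$ is a boundary representation. The main obstacle is the purity-transfer of the second paragraph---deducing $C^*$-algebra-level purity of $\Phi_0$ from operator-system-level purity of $\pi|_S$---because that is what unlocks Arveson's irreducibility theorem and lets the weak peak condition and the weak boundary representation hypothesis mesh with one another.
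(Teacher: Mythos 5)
Your proof is correct and follows essentially the same route as the paper's: extract an extreme point of the BW-compact convex set of UCP extensions of $\pi|_S$, upgrade extremality to purity using purity of $\pi|_S$, use irreducibility of the minimal Stinespring dilation together with the weak peak condition to identify the dilating representation with $\pi$, and then invoke the weak boundary representation hypothesis. The only differences are cosmetic: you prove the extremality-to-purity transfer directly where the paper cites Kleski, and you state explicitly that the argument must be applied to every extreme point before Krein--Milman closes the proof.
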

\begin{proof}
Let $\pi \in P_w(S)$. We want to show that $\pi$ is a boundary representation for $S$.\linebreak Let $K= \left\{ \Psi \in CP\left({A}, {H}_{\pi}\right): \Psi_{\mid_S} = {\pi}_{\mid_S} \right\}$. Then $K$ is a compact convex set with respect
to the BW-topology (\cite{Arv0}, page 146). By Krein-Milman theorem, there exists an extreme element $\Phi$ of $K$. Since $\Phi$ is linearly extreme and ${\Phi}_{\mid_S}$ is pure, $\Phi$ is pure (\cite {Kle}, Proposition 2.2 and Corollary 2.3). Let $(V, {H}_{\pi'}, \pi' )$ be the minimal Stinespring triple corresponding to $\Phi$ where $\pi'$ is an irreducible representation. Then $\Phi(.)=V^*\pi'(.)V$. Since $\Phi$  is unital, $\Phi(1_A) = V^*\pi'(1_A )V = V^*V= I$, so $V$ is isometric.
 
Now we show that $\pi' \sim \pi$. Let if possible, $\pi$ is not equivalent to $ \pi'$. Since $\pi \in P_{w}(S)$, there exists $s\in S$ such that
 $$
 |\left\langle \pi(s)\xi_{\pi} , \xi_{\pi}\right\rangle| =  \left\|s\right\| \mbox{for some unit vector}~ \xi_{\pi},~~~  \mbox {and}$$  
$$|\left\langle \pi'(s)\xi_{\pi'} , \xi_{\pi'}\right\rangle| < \left\|s\right\| \mbox{for all unit vectors}~ \xi_{\pi'}. 
 $$
  Now,
$$
\|s\| = |\left<\pi(s)\xi_{\pi} , \xi_{\pi}\right>| = |\left< \Phi(s)\xi_{\pi} , \xi_{\pi} \right>| = |\left< \pi'(s)V \xi_{\pi}, V \xi_{\pi}\right>| < \|s\|.
$$

 This is a contradiction. Hence $\pi' \sim \pi$. Therefore, $\pi' = U^*\pi U$ for some unitary $U:H_{\pi'} \rightarrow H_{\pi}$. Hence $\Phi= V^*\pi'V $ = $V^*U^*\pi UV = V_1^*\pi V_1$ where $V_1 = UV$ is an isometry. Thus, $\Phi(s) = V_1^*\pi(s)V_1$ for every $s \in S$. Since $\Phi_{\mid_{S}} = \pi_{\mid_{S}}$, we have $ \pi(s) = V_1^*\pi(s)V_1$ for every $s\in S$. By our assumption $\pi$ is weak boundary representation, hence $\pi(a) =  V_1^*\pi(a)V_1$ for all $a \in A$ and therefore $\pi(a) = \Phi(a)$ for all $a \in A$. Thus $\pi= \Phi$.
\end{proof}

Following examples illustrates the above theorem.

\begin{example}
Let the Volterra integration operator $V$ acting on the Hilbert space $H=L^2[0,1]$ given by
$$Vf(x)=\int_0^x f(t)dt,~~~~~~~ f\in L^2[0,1] .$$
It is well known that $V$ generates the $C^*$-algebra $K=K(H)$ of all compact operators. Let $S=span\left(V,V^*,V^2,V^{2*}\right)$ and $S$ is hyperrigid (\cite{Arv2}, Theorem 1.7). Let $\tilde{S}=S+\mathbb{C}\cdot \mathbf{1}$ be an operator system generating the $C^*$-algebra $\tilde{A}=K+\mathbb{C}\cdot \mathbf{1}$. The irreducible representations of $\tilde{A}$ are $\pi$ and $\rho$ given by
$$\pi(T+\lambda\mathbf{1})=T,~~~\text{for}~~~ T\in K,~ \lambda \in \mathbb{C}  $$
$$\rho(T+\lambda\mathbf{1})= \lambda, ~~~\text{for}~~~ T\in K,~ \lambda \in \mathbb{C} $$
Infact these are the only two irreducible representations upto unitary equivalence. $\tilde{S}$ is a hyperrigid operator system (\cite{Arv2}, Theorem 2.1) implying that $\pi$ and $\rho$ are boundary representations for $\tilde{S}$ of $\tilde{A}$. Also $\tilde{S}$ is quasi hyperrigid and therefore $\pi$, $\rho$ are weak boundary representations for $\tilde{S}$. 

Let $V+V^*\in \tilde{S}$ be the projection on the space of constants and let the constant function $1\in L^2[0,1]$, $||1||=1$

$$|\left< \pi(V+V^*)1,1 \right> |=1=||V+V^*||. $$ 
For all $\xi_\rho\in \mathbb{C}$, $||\xi_\rho||=1$.
$$|\left< \rho(V+V^*)\xi_\rho,\xi_\rho \right> |=|\left< 0\xi_\rho,\xi_\rho \right> |=0<||V+V^*||. $$ 
Therefore $\pi$ is a weak peak point.

Let $\mathbf{1}\in \tilde{S}$ and $1\in \mathbb{C}$, $||1||=1$
$$|\left< \rho(\mathbf{1})1,1 \right> |=1=||\mathbf{1}||. $$
For all $\xi_\pi\in L^2[0,1]$, $||\xi_\pi||=1$
$$|\left< \pi(\mathbf{1})\xi_\pi,\xi_\pi \right> |=|\left< 0\xi_\pi,\xi_\pi \right> |=0<||\mathbf{1}||. $$
Hence $\rho$ is a weak peak point. Also $\pi$ and $\rho$ restricted to $\tilde{S}$ are pure.

\end{example}

\begin{example}
Let $G = span\left(I,S,S^*,SS^{*}\right)$, where $S$ is the unilateral right shift in $B(H)$ and $I$ the identity operator. Let $A=C^{*}(G)$ be the $C^*$-algebra generated by $G$. We have,  $K(H) \subseteq A$. $A/K(H)\cong {C}(\mathbb{T})$ is commutative, where $\mathbb{T}$ denotes the unit circle in $\mathbb{C}$ and the spectrum $\hat{A}$ of $A$ can be identified with $\{Id\}\cup \mathbb{T}$. Since $S$ is an isometry, $G$ is hyperrigid (\cite{Arv2}, Theorem 3.3) and this will imply that all the irreducible representations of $A$ are boundary representations for $S$. Clearly $G$ is quasi hyperrigid, so all the irreducible representations are weak boundary representations for $S$.
 
Now we prove that identity representation $Id$ of $A$ is a weak peak point for $G$. Let $e_1=(1,0,0...,0)$  and let $E=I-SS^*\in G$ be the rank one projection. We have $|\left< Id(E)e_1, e_1 \right> |= 1=||E||$ and for any irreducible representation $\pi$ which is not equivalent to identity, $\pi(E)=0$. So we have $|\left< \pi(E)\eta,\eta \right> |=0<||E||$ for all unit vectors $\eta \in H_{\pi}$. This proves that $Id$ is a weak peak point. Also $Id_{|_G}$ is pure. 
\end{example}

Now we give a `lighter' version of weak peak points where we don't insist on the condition (ii) being true for all unit vectors of the corresponding Hilbert space.
\begin{definition}
Let $A$ be a unital $C^*$-algebra and $S$ be an operator system of $A$ such that $A = C^*(S)$, the $C^*$-algebra generated by $S$. An element $\pi$ of $\hat{A}$ is called a {\it quasi weak peak point} for $S$ if there exists $s \in S$ such that
\begin{itemize}
\item [(i)] $|\langle \pi(s) \xi_{\pi} , \xi_{\pi}  \rangle| =\|s\|$ for some  $\xi _\pi \in H_\pi$ with $\|\xi_{\pi}\| = 1$,
\item [(ii)] $|\langle \sigma(s) \xi_{\sigma}  , \xi_{\sigma}  \rangle|< \|s\|$ for some  $\xi _\sigma \in H_\sigma$ with $\|\xi_{\sigma}\| = 1$,
\end{itemize}where $\sigma$ is any  irreducible representation not equivalent to $\pi$.
\end{definition}

We now give a few examples.

\begin{example}
 For each $\lambda \in \mathbb C$, let $T_{\lambda} \in M_{3}(\mathbb C)$ be given by  $T_{\lambda}= \left[ \begin{array}{ccc} 0&1&0 \\ 0&0&0\\0&0&\lambda\end{array} \right]$. Let $S_{T_{\lambda}} = span \{I,{T_{\lambda}},{{T_{\lambda}}}^*\}$ denote the operator system generated by $T_{\lambda}$. Now, let $A = C^*(S_{T_{\lambda}})=M_2(\mathbb{C})\oplus \mathbb{C}$ be the $C^{*}$-algebra generated by $S_{T_{\lambda}}$. Consider the map $\pi : A \rightarrow \mathbb C$ which sends each $X \in A$ to its $(3,3)-$ entry. Thus, $\pi$ is an irreducible representation of $A$ onto $\mathbb C$. Define another irreducible representation $\rho : A \rightarrow M_{2}(\mathbb C)$ by $\rho (X) = V^*XV$, where $V = \left[ \begin{array}{cc} 1&0 \\ 0&1\\0&0\end{array} \right] $. It can be proved that $\rho$ and $\pi$ are the only irreducible representations (up to unitary equivalence) of $A$. We will prove that $\pi$ is quasi weak peak point for $\lambda =\frac{1}{2}$. Let $S=\left[ \begin{array}{ccc} 0&1&0 \\ 1&0&0\\0&0&1\end{array} \right]$ and $\xi_{\pi}=1$, $|\langle \pi(S)\xi_\pi,\xi_\pi  \rangle|=1=\|S\|$. Let $\xi_\rho=\left[ \begin{array}{c} 1 \\ 0\end{array} \right]$, $|\langle \rho(S)\xi_\rho,\xi_\rho  \rangle|=\left|\left\langle\left[ \begin{array}{cc} 0&1 \\ 1&0\end{array} \right]   \left[ \begin{array}{c} 1 \\ 0\end{array} \right],\left[ \begin{array}{c} 1 \\ 0\end{array} \right]\right\rangle\right|=0< \|S\|$. Hence $\pi$ is a quasi weak peak point for $\lambda =\frac{1}{2}$.  
\end{example}
                 
\begin{example}(\cite{Hop}, Page 488)
Let $X = [0,1]$ and $A = C(X)$. Consider $f: [0,1] \rightarrow \mathbb{R}$ which is a strictly positive and strictly increasing continuous function. Consider the $C^{\ast}$-algebra $A \otimes M_2$. Let $G$ be operator system in  $A\otimes M_2$ spanned by
$ I = \left[ \begin{array}{cc} 1&0 \\ 0&1 \end{array} \right]$ and $F = \left[ \begin{array}{cc} 0&0 \\ f&0 \end{array} \right]$.
Here $C^{\ast}(G)  = A\otimes M_2$, and the irreducible representations of $A \otimes M_2$ on $\mathbb{C}^2$ are given by $\rho_t , t \in [0,1]$ where $\rho_t(F)=\left[ \begin{array}{cc} 0&0 \\ f(t)&0 \end{array} \right]$ represents the point evaluation at $t$ and by \cite{Hop}, the only boundary representation for $G$ in $A \otimes M_2$ is $\rho_1$. We will show that $\rho_1$ is a weak peak point for $G$. Let $S=\left[ \begin{array}{cc} 0&f \\ f &0 \end{array} \right]$ and $\xi_{\rho_1}=\left[ \begin{array}{c}\frac{1}{\sqrt{2}} \\ \frac{1}{\sqrt{2}} \end{array} \right]$ for $t=1$, 
$\left|\left\langle  \rho_1\left[ \begin{array}{cc} 0&f \\ f &0 \end{array} \right]\left[ \begin{array}{c}\frac{1}{\sqrt{2}} \\ \frac{1}{\sqrt{2}} \end{array} \right],\left[ \begin{array}{c}\frac{1}{\sqrt{2}} \\ \frac{1}{\sqrt{2}} \end{array} \right]  \right\rangle\right|=\left|\left\langle \left[ \begin{array}{cc} 0&f(1) \\ f(1) &0 \end{array} \right]\left[ \begin{array}{c}\frac{1}{\sqrt{2}} \\ \frac{1}{\sqrt{2}} \end{array} \right],\left[ \begin{array}{c}\frac{1}{\sqrt{2}} \\ \frac{1}{\sqrt{2}} \end{array} \right]  \right\rangle\right|=|f(1)|=||S||.  $
And  for all $t\in [0,1)$, $|\langle \rho_t(S)\xi_t,\xi_t  \rangle| < |f(1)|$ for all $\xi_t \in H_{\rho_t}$. Hence $\rho_1$ is a weak peak point. 
\end{example}

\begin{remark}
In the classical case, when $A= C(X)$, where $X$ is a compact Hausdorff space, irreducible representations correspond to point evaluation functionals and thereby precisely to the points of $X$. Let  $\pi_x$ be  the irreducible representation corresponding to $x \in X$. An $x_0 \in X$ is a weak peak point for $G \subseteq C(X)$ if there exists $g_0 \in G$ such that
$\left|\left<\pi_{x_0} (g_0) \xi_{\pi_{x_0}}, \xi_{\pi_{x_0}} \right>\right| = \|g_0\|$ for some $\xi_{\pi_{x_0}}\in H_{\pi_{x_0}}$ with $\|\xi_{\pi_{x_0}}\|=1$ and  $|\left<\pi_{x} (g_0) \xi_{\pi_{x}}, \xi_{\pi_{x}} \right>| <  \|g_0\|$  for all $\xi_{\pi_{x}}\in H_{\pi_{x}}$ with $\|\xi_{\pi_{x}}\|=1$, where $\pi_{x}$ is any  irreducible representation not equivalent to $\pi_{x_0}$. i.e., $g_0(x_0) = \|g_0\| $ and $ | g_0(x) |  < \|g_0\|$ for every $x \neq x_0$ which implies that $x_0$ is a peak point for $G$.  Hence in the classical case both weak peak points and peak points coincide. In the classical case we can prove that quasi weak peak points and peak points also coincide using similar arguments. Hence all the three notions viz. weak peak points, quasi weak peak points and peak points coincide in the classical case.
\end{remark}

\begin{remark}
It is clear that the concepts and the corresponding analysis is more based on a modest setting than the much stronger notions employed by Arveson in his series of articles. However, it is revealed that there are non-trivial questions related to the structure of certain interesting operator spaces associated with isometries. In (\cite{BL}, Section 1.5) there are two more notions of classical peak points depending on the maps of which the non commutative analogue is yet to be studied.  
\end{remark}

{\bf{Acknowledgment}:}
 A part of this work was done during the `Discussion meeting on Functional Analysis' held during 22nd May - 25th May, 2015 and 14th August - 16th August, 2015 at the Kerala School of Mathematics(KSoM), Kozhikode, Kerala, India. The authors gratefully acknowledge the financial assistance provided by KSoM under its `Research Workshop' programme.

The first author is thankful to UGC for providing Emeritus Fellowship and the support extended to complete this work. The third author is thankful to CSIR for the fellowship provided for carrying out his research.

\bibliographystyle{amsplain}

\end{document}